\newtheorem{theorem}{Theorem}
\newtheorem{lemma}{Lemma}
\theoremstyle{definition}
\theoremstyle{remark}
\numberwithin{equation}{section}
\begin{document}
\title{ Simplicity of Lie algebras of Poisson brackets }

%

\author{Adel Alahmadi}
\address{Department of Mathematics, King Abdulaziz University, P.O.Box 80203, Jeddah, 21589, Saudi Arabia}
\email{analahmadi@kau.edu.sa}
\author{Hamed Alsulami}
\address{Department of Mathematics, King Abdulaziz University, P.O.Box 80203, Jeddah, 21589, Saudi Arabia}
\email{hhaalsalmi@kau.edu.sa}
%
%

\keywords{Poisson algebra, AMS Subject Classification : 17 B 63}

\maketitle

\begin{abstract}
Let $A$ be an associative commutative algebra with $1$ over a field of zero characteristic, $\{,\} : A \times A \to A$ is a Poisson bracket, $Z = \{ a \in A \mid \{a, A\} = (0) \}.$ We prove that if $A$ is simple as a Poisson algebra then the Lie algebra $\nicefrac{\{A,A\}}{\{A,A\}\cap Z}$ is simple.
\end{abstract}

\maketitle

\section{Introduction}

Let $F$ be a field and let $A$ be an associative commutative $F$- algebra. A bilinear bracket $\{ , \}: A \times A \to A$ is called \underline{Poisson bracket} if
\medskip
\begin{enumerate}
  \item[(1)] $(A, \{ , \})$ is a Lie algebra,
  \item[(2)] $\{ab, c\}=a\{b, c\}+ \{a, c\}b$ for arbitrary elements $a, b, c \in A.$

  An algebra $A$ with a Poisson bracket is called a \underline{Poisson algebra} $(see[4]).$
  The conditions (1), (2) imply the Poisson identity
  \item[(3)] $\{ab, c\}=\{a, bc\}+ \{b, ac\}$ that we will use later.
\end{enumerate}
\medskip

We say that an ideal $I$ of an algebra $A$ is a \underline{Poisson ideal} if $\{I, A\}\subseteq I.$ A Poisson algebra without nontrivial Poisson ideals is called a simple Poisson algebra.
Let $Z=\{a \in A | \{a, A\} =(0)\}.$

\section{ Main Result}

\begin{theorem}\label{thm1} Let $(A, \{,\})$ be a simple Poisson algebra over a field of zero characteristic, $A \ni 1$. Then the Lie algebra $\nicefrac{\{A, A\}}{ \{A, A\}\cap Z}$ is simple.

\end{theorem}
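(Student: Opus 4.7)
The plan is to follow the standard template for simplicity results of this kind: given a nonzero Lie ideal of $L := \nicefrac{\{A,A\}}{\{A,A\} \cap Z}$, construct a nonzero Poisson ideal of $A$, invoke Poisson simplicity to force that ideal to equal $A$, and then translate back. Write $K = \{A,A\}$.

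First I record two preparatory facts: (i) $K$ is itself a Lie ideal of $(A,\{,\})$ by Jacobi, since $\{\{u,v\},a\} = \{u,\{v,a\}\} - \{v,\{u,a\}\} \in K$; (ii) the associative ideal $AK$ is a Poisson ideal, since by Leibniz $\{ak,b\} = a\{k,b\} + \{a,b\}k \in AK$, both $\{k,b\}$ and $\{a,b\}$ lying in $K$. Assuming $K \neq (0)$, (ii) combined with Poisson simplicity forces $AK=A$, yielding a presentation $1 = \sum_i a_i k_i$ with $a_i \in A$ and $k_i \in K$. This identity is the main technical lever: brackets $\{m,a\}$ with arbitrary $a \in A$ can thereby be reduced to brackets $\{m,k\}$ with $k \in K$, where the Lie-ideal hypothesis $\{M,K\}\subseteq M$ applies. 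Now, given a nonzero Lie ideal $\bar M$ of $L$ with preimage $M \subseteq K$ (so $K\cap Z \subseteq M$, $\{M,K\}\subseteq M$, $M\not\subseteq Z$), I form the Lie ideal $N$ of $A$ generated by $M$. By (i), $N \subseteq K$; the same Leibniz calculation as in (ii) shows $AN$ is a Poisson ideal, and since $M \neq 0$, Poisson simplicity gives $AN = A$.

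The hard step is passing from $AN = A$ back to the desired $M + (K\cap Z) = K$. My approach is to first establish the auxiliary inclusion $\{M, A\} \subseteq M + (K\cap Z)$, which makes $\hat M := M + (K\cap Z)$ itself a Lie ideal of $A$ and collapses $N$ into $\hat M$. For this one expands, for $m \in M$ and $a \in A$,
\[
\{m,a\} \;=\; \sum_i \bigl(a a_i\{m,k_i\} + \{m,a a_i\}k_i\bigr)
\]
via Leibniz, exposing $\{m,k_i\}\in M$ by hypothesis, and controls the residual $\sum_i \{m,a a_i\}k_i$ using the alternative Poisson identity $\{xy,z\}=\{x,yz\}+\{y,xz\}$; characteristic zero enters to absorb integer coefficients produced by iterations such as $\{m^2,a\}=2m\{m,a\}$. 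Granted the inclusion, a parallel reduction on any $\{u,v\}\in K$ using $1=\sum a_i k_i$ should then force $K \subseteq \hat M$, concluding $\bar M = L$.

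The principal obstacle will be precisely this auxiliary inclusion: $AN=A$ constrains only the Lie-ideal closure $N\supseteq M$, which a priori contains iterated brackets $\{\{M,A\},A\},\ldots$ whose membership in $M+(K\cap Z)$ is not obvious and demands a careful interleaved use of the two Poisson identities together with the characteristic-zero hypothesis. A related subtlety is that $\hat M$ need not be an $A$-submodule, so the final step of recovering $K \subseteq \hat M$ from $A\hat M = A$ may itself require a second round of the same kind of bracket manipulation; if this direct route fails, a plausible fallback is to prove first that $A/Z$ is a simple Lie algebra and then deduce simplicity of $L$ from the fact that $L$ is a Lie ideal of $A/Z$.
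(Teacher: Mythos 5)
Your proposal is a plan rather than a proof: the step you yourself identify as ``the hard step''---passing from $AN=A$ (equivalently, from $1=\sum_i a_ik_i$) to the inclusion $\{M,A\}\subseteq M+(K\cap Z)$---is exactly where all the difficulty lives, and the sketch you give for it does not close. In the expansion $\{m,a\}=\sum_i\bigl(aa_i\{m,k_i\}+\{m,aa_i\}k_i\bigr)$ the first term lies in $A\cdot M$, not in $M+(K\cap Z)$, and nothing in your argument collapses $A\cdot M$ back into $M+(K\cap Z)$; the second term involves $\{m,aa_i\}$, i.e.\ a bracket of $m$ with an arbitrary element of $A$, which is precisely the object you are trying to control, so the reduction is circular. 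Invoking ``the alternative Poisson identity'' and ``characteristic zero to absorb integer coefficients'' names the right tools but does not supply the mechanism. The fallback you mention (first proving $A/Z$ is a simple Lie algebra) is also unsupported: there is no reason for $A/Z$ to be Lie simple, and even if it were, an ideal of an ideal need not be an ideal, so simplicity of $L$ would not follow formally.

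For contrast, the paper circumvents this obstacle entirely by a two-stage argument. First it proves the absorption identity $\{a\{u,v\},b\}=\{\{au,v\},b\}-\{u,\{a,v\}b\}-\{\{a,v\},ub\}$, which yields $\{\mathrm{id}_A(U^{[3]}),A\}\subseteq U$ for any Lie ideal $U$ of $\{A,A\}$; combined with the fact that a nonzero associative ideal invariant under $\{\cdot,\{A,A\}\}$ must be all of $A$ (proved via the nilradical of $A/I$ and the identity $\{u,\{u,a^2\}\}=2(\{u,a\}^2+\{u,\{u,a\}\}a)$), this shows any proper Lie ideal $U$ satisfies $U^{[3]}=0$. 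Second, using that $A$ has no nonzero nilpotents and that $\mathrm{ad}(a)^n=0$ forces $\mathrm{ad}(a)=0$ in characteristic zero, it shows any such solvable ideal satisfies $\{U,A\}=0$, i.e.\ $U\subseteq\{A,A\}\cap Z$. Your element-wise approach with $1=\sum a_ik_i$ is in the spirit of Herstein's original associative arguments, but without an analogue of the absorption lemma it leaves the central inclusion unproved, so the proposal as it stands has a genuine gap.
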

This theorem is an analog of the celebrated theorem of I. Herstein [3].
\medskip

\medskip

\begin{lemma}\label{lem1}
Let $(A, \{, \})$ be a simple Poisson algebra over a field of zero characteristic, $A \ni 1.$ Then the algebra $A$ does not contain a nonzero nilpotent element.
\end{lemma}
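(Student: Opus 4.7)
My plan is to show that the nilradical
\[
N = \{x \in A : x^k = 0 \text{ for some } k \geq 1\}
\]
is a Poisson ideal of $A$. Since $A$ is commutative with $1$, $N$ is automatically an associative ideal, and it is proper because $1 \notin N$. Once I establish $\{N, A\} \subseteq N$, the simplicity of $A$ as a Poisson algebra will force $N = (0)$, which is exactly the conclusion of the lemma.

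The problem therefore reduces to a purely algebraic claim that I would isolate as a preliminary step: \emph{if $A$ is a commutative $F$-algebra with $\operatorname{char} F = 0$ and $D : A \to A$ is any derivation, then $D$ sends nilpotent elements to nilpotent elements.} Applied with $D_b := \{-, b\}$ for each $b \in A$, this immediately yields $\{N, A\} \subseteq N$, so $N$ is a Poisson ideal.

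To prove the derivation claim, I would assume $a^n = 0$ and apply $D^n$ to the product $a \cdot a \cdots a$ via the multinomial Leibniz rule:
\[
0 \;=\; D^n(a^n) \;=\; \sum_{k_1 + \cdots + k_n = n} \frac{n!}{k_1!\, k_2! \cdots k_n!}\, D^{k_1}(a)\, D^{k_2}(a) \cdots D^{k_n}(a).
\]
The tuple $(1, 1, \ldots, 1)$ contributes $n!\, D(a)^n$. For every other tuple with $\sum_i k_i = n$, a pigeonhole argument shows that some index $j$ must have $k_j = 0$, so the corresponding summand contains $D^{0}(a) = a$ as a factor and hence lies in $aA$. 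Dividing by $n!$ (permitted in characteristic zero), I obtain $D(a)^n \in aA$; writing $D(a)^n = aq$ then gives $D(a)^{n^2} = a^n q^n = 0$, so $D(a)$ is nilpotent.

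The main obstacle is essentially bookkeeping in the multinomial expansion and pinning down the pigeonhole observation: if $k_1 + \cdots + k_n = n$ with all $k_i \geq 1$, then all $k_i = 1$, so any tuple other than $(1, \ldots, 1)$ must have a zero entry. Once the derivation claim is in hand, the passage to $\{N, A\} \subseteq N$ and the appeal to the simplicity of $A$ are immediate.
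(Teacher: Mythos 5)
Your proposal is correct and follows essentially the same route as the paper: show that the nilradical is invariant under all derivations in characteristic zero, conclude that it is a proper Poisson ideal, and invoke simplicity. The only difference is that the paper cites this derivation-invariance from Dixmier's book, whereas you supply a correct self-contained proof via the multinomial Leibniz expansion of $D^n(a^n)$ and the pigeonhole observation that any tuple $(k_1,\dots,k_n)\neq(1,\dots,1)$ with $k_1+\cdots+k_n=n$ must have a zero entry.
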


\begin{proof}
Let $N(A)$ be the radical of the algebra $A,$ $ N(A)=\{a \in A | \text{ the element } a \text{ is nilpotent} \}.$ It is known (see [2], p.107) that in an algebra over a field of zero characteristic the radical is invariant with respect to all derivations of the algebra $A.$ This implies $\{N(A), A\} \subseteq N(A),$ hence $N(A)$ is a Poisson ideal of $A.$ From the simplicity of the Poisson algebra $A$ it follows that $N(A)=(0)$ or $N(A)=A.$ The second option is not possible since $A \ni 1.$ Hence $N(A)=(0)$, which completes the proof of the lemma.
\end{proof}

\medskip

For a subspace $X \subset A,$ let $id_{A}(X)=X + AX$ be the ideal of $A$ generated by $X.$
Let $U$ be an ideal of the Lie algebra $\{A, A\}.$ Define the descending derived series of ideals: $U^{[0]}=U, U^{[i + 1]}=\{U^{[i]}, U^{[i]} \}.$  The following lemma is an analog of the Lemma $1$ from [1].
\medskip

\begin{lemma}\label{lem2}
$\{id_{A} (U^{[3]}), A\} \subseteq U.$
\end{lemma}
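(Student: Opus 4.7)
The plan is to decompose $id_{A}(U^{[3]}) = U^{[3]} + A \cdot U^{[3]}$ and handle the two summands separately. The work-horse will be an auxiliary bound
\[
\{U^{[i+1]}, A\} \subseteq U^{[i]} \qquad (i \ge 0),
\]
whose $i = 2$ instance immediately disposes of the first summand. I would prove this bound by induction on $i$ using only Jacobi: for $v, w \in U^{[i]}$ and $a \in A$,
\[
\{\{v,w\}, a\} = \{v, \{w, a\}\} - \{w, \{v, a\}\},
\]
and the induction hypothesis places $\{w, a\}$ and $\{v, a\}$ in $U^{[i-1]}$, so the whole expression sits in $\{U^{[i]}, U^{[i-1]}\} \subseteq U^{[i]}$. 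The base case $i = 0$ uses only that $U$ is a Lie ideal of $\{A, A\}$.

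The real work is the inclusion $\{A \cdot U^{[3]}, A\} \subseteq U$. Given $a, b \in A$ and $u = \{c, d\} \in U^{[3]}$ with $c, d \in U^{[2]}$, I would first use Leibniz (axiom (2)) to push the scalar $a$ into the bracket,
\[
au = a\{c, d\} = \{ac, d\} - \{a, d\}\, c,
\]
so that $\{au, b\} = \{\{ac, d\}, b\} - \{\{a, d\}\, c, b\}$. For the first piece, $\{ac, d\} \in \{A, U^{[2]}\} \subseteq U^{[1]}$ by the bound, whence $\{\{ac, d\}, b\} \in \{U^{[1]}, A\} \subseteq U$ by the bound a second time. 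For the second piece, set $e := \{a, d\} \in U^{[1]}$ (bound once more) and apply the Poisson identity (3) -- the identity singled out in the introduction:
\[
\{ec, b\} = \{e, cb\} + \{c, eb\} \in \{U^{[1]}, A\} + \{U^{[2]}, A\} \subseteq U.
\]
Adding the two pieces yields $\{au, b\} \in U$, completing the lemma.

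The main difficulty, and the reason identity (3) is recorded separately in the introduction, is precisely the second term $\{\{a, d\}\, c, b\}$. Ordinary Leibniz would expand it into a sum of products $\{a, d\}\{c, b\} + \{\{a, d\}, b\}\, c$, neither of which obviously sits in the Lie ideal $U$. The Poisson form (3) instead rewrites $\{ec, b\}$ as a pure sum of two Lie brackets, each of the form $\{U^{[j]}, A\}$ with $j \ge 1$ and hence in $U$ by the auxiliary bound. The specific depth $3$ of $U^{[3]}$ is also essential: each time a scalar from $A$ must be pushed through a bracket we lose one level in the auxiliary bound, and $U^{[3]}$ is the shallowest level that survives all the manipulations above.
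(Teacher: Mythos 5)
Your proposal is correct and follows essentially the same route as the paper: the same auxiliary bound $\{U^{[i+1]},A\}\subseteq U^{[i]}$, followed by the same Leibniz-plus-Poisson rewriting of $\{a\{c,d\},b\}$ into $\{\{ac,d\},b\}-\{e,cb\}-\{c,eb\}$, which (after renaming $c,d$ as $u,v$) is exactly the paper's identity $\{a\{u,v\},b\}=\{\{au,v\},b\}-\{\{a,v\},ub\}-\{u,\{a,v\}b\}$. The only cosmetic difference is that you prove the auxiliary bound by induction, whereas the paper obtains it in one line from $\{\{I,I\},A\}\subseteq\{I,\{I,A\}\}\subseteq I$ for any Lie ideal $I$ of $\{A,A\}$.
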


\begin{proof}
For an arbitrary ideal $I$ of the Lie algebra $\{A, A\}$ we have $\{\{I, I\}, A\} \subseteq \{I, \{I, A\}\} \subseteq I.$ Hence $\{U^{[i + 1}, A\}\subseteq U^{[i]}$ for $i \geq 0.$

For arbitrary elements $a, b, u, v \in A$ we have $\{au, v\}=a\{u, v\} + u \{a, v\}.$ Hence $\{\{au, v\}, b\}=\{a\{u, v\}, b\} + \{u \{a, v\}, b\}.$
By the Poisson identity (3) $$\{u \{a, v\}, b\} = \{u, \{a, v\}b\}+\{\{a, v\}, ub\}.$$ This implies $$\{a \{u, v\}, b\} = \{\{au, v\}, b\}-\{u, \{a,v\} b\}- \{\{a, v\},ub \}.$$

The first and the third summands on the right hand side lie in $\{\{v, A\}, A\},$ the second summand lies in $\{u, A\}.$ If $u, v \in U^{[2]}$ then by the remark above $\{v, A\}+\{u, A\}\subseteq U^{[1]}$ and $\{\{v, A\}, A\}\subseteq \{U^{[1]}, A\}\subseteq U^{[0]}=U, \{id_{A} (U^{[3]}), A\}\subseteq U,$ which completes the proof of the lemma.

\end{proof}

\medskip

From now on we assume that $A$ is a simple Poisson algebra over a field of characteristic $0$ and $A \ni 1.$

\medskip

\begin{lemma}\label{lem3}
If $I$ is a nonzero ideal of the algebra $A$ and $\{I, \{A, A\}\}\subseteq I$ then $I=A.$
\end{lemma}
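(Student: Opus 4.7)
My plan is to show that $I$ is in fact a Poisson ideal, i.e., $\{I,A\}\subseteq I$; combined with $I\neq 0$ and the simplicity of $A$ as a Poisson algebra, this forces $I=A$.

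First, I would reduce to the case where $I$ is radical, i.e.\ $I=\sqrt I$. For each $x\in\{A,A\}$ the map $D_x:=\{x,-\}$ is an associative-algebra derivation of $A$, and by antisymmetry the hypothesis $\{I,\{A,A\}\}\subseteq I$ says $D_x(I)\subseteq I$. The induced derivation on $A/I$ then preserves the nilradical $\sqrt{I}/I$ by the fact invoked in the proof of Lemma~\ref{lem1} (derivations preserve the nilradical in characteristic zero), so $D_x(\sqrt{I})\subseteq\sqrt{I}$. Hence $\sqrt{I}$ satisfies the same hypothesis as $I$. Since $\sqrt I=A$ if and only if $I=A$, we may replace $I$ by $\sqrt I$ and assume $I$ is radical.

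With $I$ radical, fix $a\in I$ and set $S(y,z):=\{\{a,y\},z\}$. The Jacobi identity in the form $S(y,z)-S(z,y)=\{a,\{y,z\}\}$ together with the hypothesis gives $S(y,z)\equiv S(z,y)\pmod{I}$. The main computation is to evaluate $S(yw,y)$ in two ways. Expanding $\{a,yw\}=y\{a,w\}+\{a,y\}w$ by Leibniz and applying Leibniz once more in the outer bracket yields
$$S(yw,y)=yS(w,y)+\{a,y\}\{w,y\}+wS(y,y).$$
On the other hand, by the symmetry, $S(yw,y)\equiv S(y,yw)=yS(y,w)+wS(y,y)\pmod{I}$. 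Subtracting, the $wS(y,y)$ contributions cancel and
$$\{a,y\}\{w,y\}+y\bigl(S(w,y)-S(y,w)\bigr)\in I.$$
Since $S(w,y)-S(y,w)\in I$, the second summand lies in $yI\subseteq I$, so $\{a,y\}\{w,y\}\in I$ for every $w\in A$. Setting $w=a$ yields $\{a,y\}^2\in I$; as $I$ is radical, $\{a,y\}\in I$. Therefore $\{I,A\}\subseteq I$, and simplicity closes the argument.

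The main obstacle is the two-way computation of $S(yw,y)$: the expansions have to be arranged so that, after subtraction, only the product $\{a,y\}\{w,y\}$ survives modulo $I$. The quadratic identity $\{a,y\}^2\in I$ by itself is not strong enough to give the conclusion; it is the radical reduction (essentially free from Lemma~\ref{lem1}) that lets one drop the square and pass to the linear statement $\{a,y\}\in I$.
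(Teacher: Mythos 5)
Your proposal is correct and follows essentially the same strategy as the paper: reduce to the case $I=\sqrt{I}$ via invariance of the radical under derivations in characteristic zero, then show $\{a,y\}^2\in I$ for $a\in I$, $y\in A$, and use radicality to conclude that $I$ is a Poisson ideal. The only difference is in how the quadratic step is reached: where you symmetrize $S(yw,y)$ and subtract, the paper gets there in one line from the identity $\{u,\{u,a^2\}\}=2\left(\{u,a\}^2+a\{u,\{u,a\}\}\right)$, whose left-hand side and second right-hand term visibly lie in $I$.
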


\begin{proof}
Let $\sqrt{I}$ be the set of all elements of the algebra $A$ that are nilpotent module $I.$ Then $\sqrt{I} \unlhd A$ and $\nicefrac{\sqrt{I}}{I}$ is the radical of the algebra $\nicefrac{A}{I}.$

Let $d$ be a derivation of the algebra $A$ such that $d(I) \subseteq I.$ Then $d$ induces a derivation on the factor algebra $\nicefrac{A}{I}.$ Since the radical of algebra $\nicefrac{A}{I}$ is invariant under all derivations of $\nicefrac{A}{I}$ if follows that $d \left(\nicefrac{\sqrt{I}}{I}\right)\subseteq \nicefrac{\sqrt{I}}{I}$ and therefore $d(\sqrt{I})\subseteq \sqrt{I}.$

This implies $\{\sqrt{I}, \{A, A\}\} \subseteq\sqrt{ I}.$ If $\sqrt{I} = A \ni 1$ then $I=A$ as well. From now on considering the ideal $\sqrt{I}$ instead of $I$ we will assume that $\sqrt{I}=I$.

For arbitrary element $a \in A, u \in I$ we have $\{u, \{u, a^2\}\}=2\left(\{u, a\}^2 + \{u, \{u, a\}\} a \right),$ $\{u, \{u, A\}\}\subseteq \{u, \{A, A\}\}\subseteq I.$ Hence $\{u, a\}^2 \in I, \{u, a\} \in \sqrt{I} =I.$ We proved that $\{I, A\}\subseteq I,$ i.e $I$ is a Poisson ideal of $A.$ Since the Poisson algebra$\left(A, \{,\} \right)$ is simple we conclude that $I=A.$ This completes the proof of lemma.
\end{proof}

\medskip

\begin{lemma}\label{lem4}
If $U$ is a proper ideal of the Lie algebra $\{A, A\}$ then $U^{[3]}=0.$
\end{lemma}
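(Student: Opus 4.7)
The plan is to assume $U^{[3]} \neq (0)$ and derive a contradiction by combining Lemmas \ref{lem2} and \ref{lem3}. Specifically, I will apply Lemma \ref{lem3} to the associative ideal $J = id_{A}(U^{[3]})$ of $A$, so the task reduces to verifying the hypothesis $\{J, \{A,A\}\} \subseteq J$.

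An auxiliary fact I need first is that each term $U^{[i]}$ of the derived series is itself an ideal of the Lie algebra $\{A,A\}$. This is immediate by induction on $i$: the base case is the hypothesis on $U$, and if $U^{[i]}$ is an ideal, then for $u_{1}, u_{2} \in U^{[i]}$ and $x \in \{A,A\}$ the Jacobi identity gives
$$\{\{u_{1}, u_{2}\}, x\} = \{\{u_{1}, x\}, u_{2}\} + \{u_{1}, \{u_{2}, x\}\} \in \{U^{[i]}, U^{[i]}\} = U^{[i+1]}.$$
In particular $\{U^{[3]}, \{A,A\}\} \subseteq U^{[3]}$.

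To verify $\{J, \{A,A\}\} \subseteq J$, I take a typical element $j = v + au$ of $J$ with $u, v \in U^{[3]}$ and $a \in A$, together with a typical $x \in \{A,A\}$. The Leibniz rule gives $\{au, x\} = a\{u, x\} + \{a, x\} u$. The previous paragraph puts $\{u, x\}$ in $U^{[3]}$, so $a \{u, x\} \in A \cdot U^{[3]} \subseteq J$, while $\{a, x\} u = u \cdot \{a,x\} \in U^{[3]} \cdot A \subseteq J$ and $\{v, x\} \in U^{[3]} \subseteq J$. Hence $\{J, \{A,A\}\} \subseteq J$.

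Lemma \ref{lem3} now forces $J = (0)$ or $J = A$. If $J = A$, then by Lemma \ref{lem2} I would have $\{A, A\} = \{J, A\} \subseteq U$, contradicting the properness of $U$. Therefore $J = (0)$, and since $U^{[3]} \subseteq J$, also $U^{[3]} = (0)$. The one step requiring a little care is the closure computation $\{J, \{A,A\}\} \subseteq J$, but once one observes that $U^{[3]}$ is itself a Lie ideal in $\{A,A\}$, this drops out directly from the Leibniz rule.
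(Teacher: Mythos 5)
Your proof is correct and follows essentially the same route as the paper: form $id_A(U^{[3]})$, apply Lemma \ref{lem3} to conclude it is all of $A$ if $U^{[3]}\neq(0)$, and then use Lemma \ref{lem2} to contradict the properness of $U$. The only difference is that you spell out the closure property $\{id_A(U^{[3]}),\{A,A\}\}\subseteq id_A(U^{[3]})$ (via the fact that each $U^{[i]}$ is a Lie ideal and the Leibniz rule), which the paper dismisses as ``clearly''; your verification is accurate.
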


\begin{proof}
Let $I=id_A\left(U^{[3]}\right).$ Clearly $\{I,\{A, A\} \}\subseteq I.$ If $U^{[3]}\neq 0$ then by Lemma 3 we have $I=A.$ Hence by Lemma 2 $\{A, A\}\subseteq U.$ This completes the proof of the lemma.
\end{proof}

\medskip

For an element $a \in A$ consider the adjoint operator$ad(a): A \to A, x \to\{a, x\}.$

\medskip

\begin{lemma}\label{lem5}
Let $a \in A, n\geq 1$ and suppose that $ad(a)^n=0.$ Then $ad(a)=0.$
\end{lemma}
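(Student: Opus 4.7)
The plan is to use the fact that, by axiom (2) of a Poisson bracket, $D := ad(a)$ is a derivation of the associative algebra $A$: from $\{ab,c\} = a\{b,c\}+\{a,c\}b$ one gets $D(xy)=D(x)y+xD(y)$. Consequently, iterating gives the Leibniz formula
\[
D^{n}(xy)\;=\;\sum_{k=0}^{n}\binom{n}{k}\,D^{k}(x)\,D^{n-k}(y).
\]
Combining this formula with the absence of nilpotents (Lemma~\ref{lem1}) should force $D$ itself to be zero once some power of it vanishes.

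I would argue by contradiction. Assume $D\neq 0$ and let $m\geq 2$ be the minimal integer with $D^{m}=0$. Choose $x\in A$ such that $b:=D^{m-1}(x)\neq 0$; then $D(b)=D^{m}(x)=0$. Apply the Leibniz formula above to $x\cdot x$ with exponent $2m-2$:
\[
0\;=\;D^{2m-2}(x^{2})\;=\;\sum_{k=0}^{2m-2}\binom{2m-2}{k}\,D^{k}(x)\,D^{2m-2-k}(x).
\]
The summand indexed by $k$ vanishes unless both $k\leq m-1$ and $2m-2-k\leq m-1$, which together force $k=m-1$. Hence only one term survives, giving
\[
0\;=\;\binom{2m-2}{m-1}\,b^{2}.
\]
Since the base field has characteristic zero, $\binom{2m-2}{m-1}\neq 0$, so $b^{2}=0$.

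Now invoke Lemma~\ref{lem1}: the simple Poisson algebra $A$ contains no nonzero nilpotent elements, so $b=0$, contradicting the choice of $x$. Therefore the assumption $D\neq 0$ is untenable and $ad(a)=0$, as required. The only real content is the binomial support argument isolating $b^{2}$; everything else is mechanical, so I do not anticipate a genuine obstacle, provided the characteristic hypothesis is used (to conclude $b^2=0$ from a nonzero integer multiple of it) and Lemma~\ref{lem1} is available (to pass from $b^2=0$ to $b=0$).
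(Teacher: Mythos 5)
Your proof is correct, and it streamlines the paper's argument. Both proofs rest on the same two pillars: $ad(a)$ is a derivation of the associative product (so the iterated Leibniz/multinomial rule applies), and Lemma~\ref{lem1} upgrades ``some power of an element is in a nice set'' to ``the element itself is.'' The difference is in the execution. The paper proves the descent step $ad(a)^{n}=0\Rightarrow ad(a)^{n-1}=0$ by setting $b_{1}=ad(a)^{n-2}b$ and expanding $ad(a)^{n}$ applied to the $n$-th power $b_{1}^{n}$; the multinomial expansion leaves only the term $n!\,\{a,b_{1}\}^{n}$, whence $\{a,b_{1}\}^{n}=0$ and Lemma~\ref{lem1} finishes the step, with the full claim following by induction on $n$. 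You instead pass directly to the minimal $m\geq 2$ with $D^{m}=0$, apply $D^{2m-2}$ to the square $x^{2}$ of an element with $D^{m-1}(x)\neq 0$, and the binomial support argument isolates $\binom{2m-2}{m-1}b^{2}$ in one shot — no induction, only a square rather than an $n$-th power, and a binomial coefficient rather than $n!$ as the scalar to be cancelled in characteristic zero. Your version is arguably cleaner; the paper's inductive version yields the slightly stronger intermediate statement that each $ad(a)^{k}b$ vanishes step by step, but that extra information is not needed anywhere. One pedantic point worth a sentence in a final write-up: the fact that $ad(a)$ is a derivation follows from axiom (2) together with the antisymmetry of the bracket (axiom (1)), since (2) as stated is the Leibniz rule in the first argument.
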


\begin{proof}
We will prove that $ad(a)^n=0, n \geq 2,$ implies $ad(a)^{n-1}=0.$ Indeed, let $b \in A.$ Consider the element $b_1=ad(a)^{n-2}b,$ we have $\{a,\{a,b_1\}\}=ad(a)^nb=0.$
Now,
$$0=ad(a)^nb_1^n=n!\{a,b_1\}^n+\sum\limits_{k_1+\cdots+k_n=n\atop (k_1,\cdots,k_n)\neq (1,1,\cdots,1)} (ad(a)^{k_1}b_1)\cdots(ad(a)^{k_n}b_1)$$
In each summaned on the right hand side for at least one $i,\, 1\leq i\leq n,$ we have $k_i\geq 2.$
Hence $ad(a)^{k_i}b_1=0.$ Since the characteristic of the ground field is $0$  we conclude that $\{a,b_1\}^n=0.$
By Lemma $1$ it implies that $\{a,b_1\}=ad(a)^{n-1}b_1=0,$ which completes the proof of the lemma.
\end{proof}

\medskip

\begin{lemma}\label{lem6}
Let $U$ be an abelian ideal of the algebra $\{A,A\}.$ Then $\{U,A\}=(0).$
\end{lemma}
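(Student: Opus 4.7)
The plan is to show that for every $u\in U$ the operator $ad(u)$ is nilpotent on $A$, and then invoke Lemma~\ref{lem5} to conclude $ad(u)=0$, i.e.\ $\{u,A\}=(0)$.

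The key observation is the chain of containments that the hypotheses on $U$ provide. Fix $u\in U$ and an arbitrary $a\in A$. First, $\{u,a\}$ obviously lies in $\{A,A\}$ since $u,a\in A$. Second, because $U$ is an \emph{ideal} of the Lie algebra $\{A,A\}$, bracketing with any element of $\{A,A\}$ sends $U$ back into $U$; in particular
\[
\{u,\{u,a\}\}\in\{u,\{A,A\}\}\subseteq U.
\]
Finally, because $U$ is \emph{abelian}, $\{u,U\}=(0)$, so
\[
ad(u)^{3}(a)=\{u,\{u,\{u,a\}\}\}\in\{u,U\}=(0).
\]

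This shows $ad(u)^{3}=0$ on all of $A$. Applying Lemma~\ref{lem5} (valid because the ground field has characteristic zero and $A$ has no nonzero nilpotents by Lemma~\ref{lem1}, both of which are built into Lemma~\ref{lem5}) gives $ad(u)=0$, that is $\{u,A\}=(0)$. Since $u\in U$ was arbitrary, $\{U,A\}=(0)$.

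I do not expect a genuine obstacle here: the whole point is that the two hypotheses ``ideal'' and ``abelian'' combine to push the third iterate of $ad(u)$ into $\{u,U\}=0$, and Lemma~\ref{lem5} then does the heavy lifting of passing from nilpotence to vanishing. The only thing to be careful about is keeping track of where each bracketed expression lives at each step, which is exactly the containment chain displayed above.
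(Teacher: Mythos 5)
Your proof is correct and is essentially identical to the paper's: both establish $ad(u)^{3}A=\{u,\{u,\{u,A\}\}\}\subseteq\{U,\{U,\{A,A\}\}\}\subseteq\{U,U\}=(0)$ using the ideal property and then abelianness, and conclude via Lemma~\ref{lem5}. Nothing further is needed.
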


\begin{proof}
For an arbitrary element $u\in U$ we have $$ ad(u)^3A=\{u,\{u,\{u,A\}\}\}\subseteq \{U,\{U,\{A,A\}\}\subseteq \{U,U\}=(0).$$
By Lemma $5$ $ad(u) A=\{u,A\}=(0),$ which completes the proof of the lemma.
\end{proof}

\medskip

\begin{lemma}\label{lem7}
Let $U$ be an ideal of the algebra $\{A,A\}$ such that $U^{[3]}=(0).$ Then $\{U,A\}=(0).$
\end{lemma}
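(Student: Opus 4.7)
The plan is to ``lift'' Lemma 6 from the abelian case to the solvable case by iterating the same adjoint-power argument, using Lemma 5 as the main propagation device. First I would record the structural fact that if $U \trianglelefteq \{A,A\}$, then every term $U^{[i]}$ of its derived series is again an ideal of $\{A,A\}$ (a standard consequence of the Jacobi identity), and in particular $U^{[2]}$ is an abelian ideal of $\{A,A\}$ because $(U^{[2]})^{[1]} = U^{[3]} = (0)$. Applying Lemma 6 to $U^{[2]}$ then immediately yields $\{U^{[2]}, A\} = (0)$, which is the base of the induction.

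The key computational step, which I would isolate and reuse, is the following: for any $u \in U$,
\[
ad(u)^3 A \;=\; \{u, \{u, \{u, A\}\}\} \;\subseteq\; \{U, U\} \;=\; U^{[1]}.
\]
Indeed $\{u, A\} \subseteq \{A,A\}$, so $\{u,\{u,A\}\} \subseteq \{U,\{A,A\}\} \subseteq U$ by the ideal property, and one more bracket with $u$ lands in $\{U,U\}$. The same identity, applied with $U$ replaced by $U^{[1]}$ (also an ideal of $\{A,A\}$), gives $ad(u)^3 A \subseteq U^{[2]}$ for every $u \in U^{[1]}$.

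With these two ingredients the conclusion is obtained in two passes. First, for $u \in U^{[1]}$, the previous paragraph gives $ad(u)^3 A \subseteq U^{[2]}$, so
\[
ad(u)^4 A \;\subseteq\; \{u, U^{[2]}\} \;\subseteq\; \{A, U^{[2]}\} \;=\; (0),
\]
using $u \in U^{[1]} \subseteq A$ and the already established $\{U^{[2]},A\}=(0)$. Lemma 5 then forces $ad(u) = 0$, i.e.\ $\{U^{[1]}, A\} = (0)$. Second, repeating the trick one level up: for $u \in U$, $ad(u)^3 A \subseteq U^{[1]}$, hence $ad(u)^4 A \subseteq \{u, U^{[1]}\} \subseteq \{A, U^{[1]}\} = (0)$, and Lemma 5 yields $\{U, A\} = (0)$.

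The only point that requires care, and what I expect to be the ``obstacle'' if one is being pedantic, is the inclusion $ad(u)^3 A \subseteq U^{[1]}$: it hinges crucially on the fact that $U$ is an ideal of $\{A,A\}$ (not merely a Lie subalgebra) so that a bracket of $u \in U$ with an arbitrary element of $\{A,A\}$ stays inside $U$. Once that observation is cleanly stated, the rest is a two-step descent along the derived series, with Lemma 5 upgrading the polynomial vanishing $ad(u)^4 A = (0)$ to the desired linear vanishing $ad(u) = 0$.
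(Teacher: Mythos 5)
Your proposal is correct and follows essentially the same route as the paper: establish $\{U^{[2]},A\}=(0)$ via Lemma 6, then use the ideal property to show $ad(u)^4A=(0)$ first for $u\in U^{[1]}$ and then for $u\in U$, invoking Lemma 5 at each stage. The only difference is cosmetic --- you isolate the inclusion $ad(u)^3A\subseteq U^{[1]}$ as a reusable step, whereas the paper writes the chain of containments for $ad(u)^4A$ directly.
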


\begin{proof}
The ideal $U^{[2]}$ of the Lie algebra $\{A,A\}$ is abelian. Hence by Lemma $6$ $\{U^{[2]},A\}=(0).$ Let $u\in U^{[1]}.$ we have
$$ad(u)^4 A=\{u,\{u,\{u,\{u,A\}\}\}\}\subseteq \{U^{[1]},\{U^{[1]},U^{[1]}\}\}\subseteq \{A,U^{[2]}\}=(0).$$
Hence from Lemma $5$ it follows that $\{U^{[1]},A\}=(0).$ Again choosing an arbitrary element $u\in U$ we get
$$ad(u)^4 A=\{u,\{u,\{u,\{u,A\}\}\}\}\subseteq \{U,\{U,U\}\}\subseteq\{A,U^{[1]}\}=(0).$$ Hence by Lemma $5$ $\{U,A\}=(0),$ which completes the proof of the lemma.
\end{proof}

\begin{proof}[ Proof of Theorem \ref{thm1}]
Now combining Lemmas $4$ and $7$ we get the assertion of the theorem.
\end{proof}


\begin{thebibliography}{99}
\bibitem[1]{AA} A. Alahmadi, H. Alsulami, \textit{Finite generation of Lie derived powers of associative algebras,}  J. of Algebra and Applcations, v.18, no.3, 2019.

\bibitem[2]{J} J. Dixmier , \textit{Enveloping algebras,} North-Holland Publishing Company, New York, 1977..

\bibitem[3]{IH} I. N. Herstein, \textit{Lie and Jordan structuers in simple associative rings,} Bull.. AMS, v.67, no.6 (1961), 517-531.

\bibitem[4]{IH} I. Shestakov, \textit{Quantization of Poisson superalgebras and speciality of Jordan Poisson superalgebras,} Algebra and Logic, v.32, no.5 (1993), 309-317.
\end{thebibliography}
\end{document}